\documentclass{amsart}
\usepackage{amssymb,amsthm,amsmath,epsfig,latexsym,xypic}
\usepackage{calc}
\usepackage{latexsym}
\usepackage{amscd,amssymb,subfigure,hyperref}
\usepackage[arrow,matrix,graph,frame,poly,arc,tips]{xy}

\begin{document}

\newcommand{\mmbox}[1]{\mbox{${#1}$}}
\newcommand{\affine}[1]{\mmbox{{\mathbb A}^{#1}}}
\newcommand{\Ann}[1]{\mmbox{{\rm Ann}({#1})}}
\newcommand{\caps}[3]{\mmbox{{#1}_{#2} \cap \ldots \cap {#1}_{#3}}}
\newcommand{\N}{{\mathbb N}}
\newcommand{\Z}{{\mathbb Z}}
\newcommand{\R}{{\mathbb R}}
\newcommand{\A}{{\mathcal A}}
\newcommand{\B}{{\mathcal B}}
\newcommand{\C}{{\mathbb K}}
\newcommand{\PP}{{\mathbb P}}
\newcommand{\cO}{{\mathcal O}}
\newcommand{\Tor}{\mathop{\rm Tor}\nolimits}
\newcommand{\ot}{\mathop{\rm OT}\nolimits}
\newcommand{\ao}{\mathop{\rm AOT}\nolimits}
\newcommand{\Ext}{\mathop{\rm Ext}\nolimits}
\newcommand{\Hom}{\mathop{\rm Hom}\nolimits}
\newcommand{\Cl}{\mathop{\rm Cl}\nolimits}
\newcommand{\im}{\mathop{\rm Im}\nolimits}
\newcommand{\rank}{\mathop{\rm rank}\nolimits}
\newcommand{\codim}{\mathop{\rm codim}\nolimits}
\newcommand{\supp}{\mathop{\rm supp}\nolimits}
\newcommand{\CB}{Cayley-Bacharach}
\newcommand{\HF}{\mathrm{HF}}
\newcommand{\HP}{\mathrm{HP}}
\newcommand{\coker}{\mathop{\rm coker}\nolimits}
\sloppy
\newtheorem{defn0}{Definition}[section]
\newtheorem{prop0}[defn0]{Proposition}
\newtheorem{conj0}[defn0]{Conjecture}
\newtheorem{thm0}[defn0]{Theorem}
\newtheorem{lem0}[defn0]{Lemma}
\newtheorem{corollary0}[defn0]{Corollary}
\newtheorem{example0}[defn0]{Example}

\newenvironment{defn}{\begin{defn0}}{\end{defn0}}
\newenvironment{prop}{\begin{prop0}}{\end{prop0}}
\newenvironment{conj}{\begin{conj0}}{\end{conj0}}
\newenvironment{thm}{\begin{thm0}}{\end{thm0}}
\newenvironment{lem}{\begin{lem0}}{\end{lem0}}
\newenvironment{cor}{\begin{corollary0}}{\end{corollary0}}
\newenvironment{exm}{\begin{example0}\rm}{\end{example0}}

\newcommand{\msp}{\renewcommand{\arraystretch}{.5}}
\newcommand{\rsp}{\renewcommand{\arraystretch}{1}}

\newenvironment{lmatrix}{\renewcommand{\arraystretch}{.5}\small
 \begin{pmatrix}} {\end{pmatrix}\renewcommand{\arraystretch}{1}}
\newenvironment{llmatrix}{\renewcommand{\arraystretch}{.5}\scriptsize
 \begin{pmatrix}} {\end{pmatrix}\renewcommand{\arraystretch}{1}}
\newenvironment{larray}{\renewcommand{\arraystretch}{.5}\begin{array}}
 {\end{array}\renewcommand{\arraystretch}{1}}

\def \a{{\mathrel{\smash-}}{\mathrel{\mkern-8mu}}
{\mathrel{\smash-}}{\mathrel{\mkern-8mu}} {\mathrel{\smash-}}{\mathrel{\mkern-8mu}}}

\title[Euler characteristic of coherent sheaves on simplicial torics]%
{Euler characteristic of coherent sheaves on \\simplicial torics via the Stanley-Reisner ring}
\author{Hal Schenck}
\thanks{Schenck supported by NSF 07--07667, NSA 904-03-1-0006}
\address{Schenck: Mathematics Department \\ University of
 Illinois \\
   Urbana \\ IL 61801\\USA}
\email{schenck@math.uiuc.edu}

\subjclass[2000]{14M25, 14C40} \keywords{Toric variety, Cox ring, Chow ring, cohomology}

\begin{abstract}
\noindent We combine work of Cox on the total coordinate ring of
a toric variety and results of Eisenbud-Musta\c{t}\v{a}-Stillman and 
Musta\c{t}\v{a} on cohomology of toric and monomial ideals to obtain a 
formula for computing $\chi(\mathcal{O}_X(D))$ for a Weil divisor $D$ on a 
complete simplicial toric variety $X_{\Sigma}$. The main point is to 
use Alexander duality to pass from the toric irrelevant ideal, which 
appears in the computation of $\chi(\mathcal{O}_X(D))$,
to the Stanley-Reisner ideal of $\Sigma$, which is used in defining 
the Chow ring of $X_{\Sigma}$.
\end{abstract}
\maketitle
 
\section{Introduction}\label{sec:one}
For a divisor $D$ on a smooth complete variety $X$, the
Hirzebruch-Riemann-Roch theorem describes the Euler characteristic
of $\mathcal{O}_X(D)$ in terms of intersection theory:
\[
\chi(\mathcal{O}_X(D)) = \int ch(D)\cdot Td(X).
\]
The divisor $D$ corresponds to a class $[D]$ in the Chow ring of $X$,
and $ch(D)$ consists of the first $n=dim(X)$ terms of the formal
Taylor expansion of $e^D$. The Todd class of $D$ is defined similarly,
but using the Taylor expansion for $\frac{D}{1-e^{-D}}.$
To define the Todd class of $X$, filter the tangent bundle 
$\mathcal{T}_X$ by line bundles $\mathcal{O}(D_i)$. Then 
one shows that $Td(X) = \prod_{i=1}^n Td(D_i)$ 
is independent of the filtration. 

Let $\Sigma \subseteq \mathbb{R}^n$ be a complete simplicial 
rational polyhedral fan with $d=|\Sigma(1)|$ rays, 
$X_{\Sigma}$ the associated toric 
variety, and $D \in \Cl(X_{\Sigma})$ a Weil divisor on $X_{\Sigma}$. 
We combine Alexander duality and the Cox ring with results of 
Musta\c{t}\v{a} \cite{M1} on monomial ideals to obtain a 
formula for the Euler characteristic of the associated rank one reflexive sheaf 
${\mathcal O}_{X_{\Sigma}}(D)$. 
Put $Z =\{0,1\}^d$ and ${\bf 1} =\{1\}^d$. Then for $l \gg 0$, 
\begin{equation}\label{MAIN}
\chi({\mathcal O}_X(D)) = \sum\limits_{{\bf m} \in Z \setminus {\bf 0}}(-1)^{|{\bf m}|-d+n} \dim_{\C}(S/I_\Sigma)_{{\bf 1}-{\bf m}}\cdot \dim_{\C}S_{l \cdot \phi({\bf m})+D}.
\end{equation}
Here $I_\Sigma$ denotes the Stanley-Reisner ideal, and 
$\Z^d \stackrel{\phi}{\rightarrow} \Cl(X_{\Sigma})$ is the standard
surjection of $\Z^d$ onto the class group. The Cox ring $S$ 
is a polynomial ring, graded by $\Cl(X_{\Sigma})$; on $S/I_{\Sigma}$ we
use the $\mathbb{Z}^d$ grading. We recall the definitions of these objects
in \S 2. Any coherent sheaf on a nondegenerate toric variety corresponds to a
finitely generated $\Cl(X_{\Sigma})$-graded $S$--module
(see \cite{C} for the simplicial case, and \cite{M2} for the general case), 
so such a sheaf has a resolution by rank one reflexive sheaves, and 
Equation~\ref{MAIN} yields a formula for $\chi({\mathcal F})$ for
any coherent sheaf ${\mathcal F}$. Bounds on $l$ are determined by 
Eisenbud-Musta\c{t}\v{a}-Stillman in \cite{EMS}, and are discussed
in \S 2.
\subsection*{Connections to physics and some history}
The methods which are used to prove Equation~\ref{MAIN} 
have applications to computations arising in mathematical 
physics: in a recent preprint \cite{BJRR}, 
Blumenhagen, Jurke, Rahn and Roschy conjectured an algorithm for 
computing the cohomology of line bundles on a toric variety. 
Their motivation was to compute massless modes in Type IIB/F
and heterotic compactifications, on a complete intersection in
a toric variety.  A strong form of the algorithm is established 
by Maclagan and Smith in Corollary~3.4 of \cite{MacS}; later proofs 
appear in Jow \cite{Jow} and Rahn-Roschy \cite{RR}. In all these
papers Alexander duality and results of \cite{EMS} play a key role, 
as they do in the proof of Equation~\ref{MAIN}. 
The original motivation for this work was to find a toric
proof for the Hirzebruch-Riemann-Roch theorem.

The first toric interpretation of Hirzebruch-Riemann-Roch
is due to Khovanskii \cite{K}. In \cite{PK1}, \cite{PK2}, 
Pukhlikov-Khovanskii study additive measures on virtual polyhedra,
and obtain a Riemann-Roch formula for integrating
sums of quasipolynomials on virtual polytopes. 
Pommersheim \cite{PO1} and Pommersheim and
Thomas \cite{PO2} obtain results on Todd classes of simplicial 
torics, and in \cite{BV}, Brion-Vergne prove 
an equivariant Riemann-Roch for simplicial torics. 

The results of Eisenbud-Musta\c{t}\v{a}-Stillman in 
\cite{EMS} show that in the toric setting, 
$\chi(\mathcal{O}_X(D))$ may be
calculated via certain $Ext$ modules over the Cox ring of $X$.
On the other hand, evaluating the expression 
$\int ch(D)\cdot Td(X)$ involves a computation in the Chow
ring of $X$, and the Cox and Chow rings of a simplicial toric 
variety are connected by Alexander duality. 

The paper is structured as follows: in \S 2
we recall the results of \cite{EMS} and the computation of 
cohomology via the Cox ring. In \S 3 we introduce the 
Chow ring, recall that the Stanley-Reisner
ideal of $\Sigma$ is the Alexander dual of the toric irrelevant 
ideal of $\Sigma$, and use results of Musta\c{t}\v{a} and Stanley
to connect the parts. Equation~\ref{MAIN} is proved in \S 4, 
and illustrated on the Hirzebruch surface ${\mathcal H}_2$.
\subsection*{Toric facts}
Let $N \simeq \Z^n$ be a lattice, with dual lattice $M$, and 
let $\Sigma \subseteq N\otimes_{\Z}\R \simeq \R^n$ be a 
complete simplicial rational polyhedral fan (henceforth, 
simply fan), with $\Sigma(i)$ denoting the set of 
$i$-dimensional faces of $\Sigma$, and let $X_\Sigma$ be the 
associated toric variety. A Weil divisor on $X_\Sigma$ is of the form
\[
D = \sum_{\rho \in \Sigma(1)}a_\rho D_\rho, \mbox{ with }a_\rho \in \Z.
\]
Let $d = |\Sigma(1)|$. The class group of $X_\Sigma$ has a presentation
\[
0 \longrightarrow M \stackrel{\psi}{\longrightarrow} \Z^d \stackrel{\phi}{\longrightarrow} \Cl(X_\Sigma) 
\longrightarrow 0,
\]
where $\psi$ is defined by 
\[
\chi^{m} \mapsto \sum_{\rho \in \Sigma(1)}\langle m, v_{\rho} \rangle D_\rho, \mbox{ where }v_{\rho}\mbox{ is a minimal lattice generator for }\rho.
\] 
In \cite{C}, Cox introduced the total coordinate ring (henceforth called the {\em Cox ring}) of $X_\Sigma$. This is a polynomial ring, graded by the class group $\Cl(X_\Sigma)$.
\begin{defn} 
\[
S = \C[x_\rho \mid \rho \in \Sigma(1)] = \bigoplus\limits_{\alpha \in \Cl(X_\Sigma)}S_{\alpha}.
\]
\end{defn}
The utility of this grading is that for $\alpha \simeq D \in \Cl(X_{\Sigma})$, 
$H^0(\mathcal{O}_X(D)) \simeq S_{\alpha}$. 
For more background on toric varieties, see \cite{CLS}, \cite{danilov}, or \cite{F}.
\section{Cohomology and the Cox ring}\label{sec:two}
The Cox ring has a distinguished ideal, the {\em toric irrelevant ideal} 
\[
B(\Sigma) = \langle x^{\hat \sigma} \mid \sigma \in \Sigma \rangle, \mbox{ where }x^{\hat \sigma}=  \prod_{\rho \not\in \sigma(1)}x_\rho.
\]
Note that $B(\Sigma)$ is generated by monomials corresponding to the complements
of the maximal faces of $\Sigma$. For an ideal 
$I = \langle f_1,\ldots,f_m\rangle$ let 
\[
I^{[l]} =  \langle f_1^l,\ldots,f_m^l \rangle.
\]
In \cite{EMS}, Eisenbud-Musta\c{t}\v{a}-Stillman show that for 
$D \in \Cl(X_\Sigma)$, $i \ge 1$ and $l \gg 0$, 
\begin{equation}\label{EMSmain}
H^i(\mathcal{O}_X(D)) \simeq Ext^{i+1}_S(S/B(\Sigma)^{[l]},S(D))_0,
\end{equation}
They also obtain a bound for $l$. Fix a basis for $M$, and let 
$A$ be a $d \times n$ matrix with a row for each 
ray $u_{\rho} \in \Sigma(1)$, written with respect to the fixed basis.
Define
\begin{equation}
\label{Bounds1}
\begin{aligned}
a &= \max( |\text{entries of $A$}|)\\
b &= \max( |\text{$(n-1)\times (n-1)$ minors of $A$}|)\\
c &= \min( |\text{nonzero $n\times n$ minors of $A$}|).
\end{aligned}
\end{equation}
Corollary~3.3 of \cite{EMS} shows that if $D = \sum_\rho a_\rho D_\rho$, then 
Equation~\ref{EMSmain} holds for
\begin{equation}\label{EMSbound}
l \ge n^2 \max_{\rho \in \Sigma(1)}(|a_\rho|) ab/c
\end{equation}
For brevity, we use lower case to denote $\dim_{\C}$ of an object, e.g. $s_{\alpha} = \dim_{\C}S_\alpha$. 
\begin{lem}\label{L1}
For $l \gg 0$ and $D \in \Cl(X_{\Sigma})$,
\begin{equation}\label{L22}
\chi(\mathcal{O}_X(D)) = \sum\limits_{i=0}^n (-1)^ih^i(D) = s_{D} - \sum\limits_{i=0}^{n+1} (-1)^i
ext^{i}_S(S/B(\Sigma)^{[l]},S(D))_0.
\end{equation}
\end{lem}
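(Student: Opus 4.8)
The plan is to start from the definition $\chi(\mathcal{O}_X(D)) = \sum_{i=0}^n (-1)^i h^i(D)$, which is the first equality and needs no argument beyond noting that $h^i(D) = 0$ for $i > n = \dim X_\Sigma$. The substantive content is the second equality, and here I would feed the Eisenbud-Musta\c{t}\v{a}-Stillman isomorphism (Equation~\ref{EMSmain}) into this alternating sum. For $l \gg 0$ (specifically $l$ satisfying the bound in Equation~\ref{EMSbound} applied to $D$), that result gives $h^i(D) = ext^{i+1}_S(S/B(\Sigma)^{[l]}, S(D))_0$ for every $i \ge 1$. The only term not covered is $i = 0$, which I handle separately using the fact recorded in \S 1 that $H^0(\mathcal{O}_X(D)) \simeq S_D$, so $h^0(D) = s_D$.

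The key step is then a bookkeeping reindexation. Write
\[
\chi(\mathcal{O}_X(D)) = h^0(D) + \sum_{i=1}^n (-1)^i h^i(D) = s_D + \sum_{i=1}^n (-1)^i ext^{i+1}_S(S/B(\Sigma)^{[l]}, S(D))_0.
\]
Substituting $j = i+1$, the sum runs over $j$ from $2$ to $n+1$, with sign $(-1)^{j-1} = -(-1)^j$, giving $s_D - \sum_{j=2}^{n+1} (-1)^j ext^{j}_S(S/B(\Sigma)^{[l]}, S(D))_0$. To match the stated formula, which has the sum starting at $j=0$, I must check that the $j=0$ and $j=1$ terms vanish. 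The $j=0$ term is $\Hom_S(S/B(\Sigma)^{[l]}, S(D))_0$; since $S$ is a domain and $S/B(\Sigma)^{[l]}$ is a torsion module, any homomorphism to the torsion-free module $S(D)$ is zero, so this term is $0$. For $j=1$, one uses that $B(\Sigma)^{[l]}$ has height at least $2$ in $S$ for $l \gg 0$ — equivalently, the irrelevant ideal $B(\Sigma)$ cuts out a subset of codimension $\ge 2$ in affine space, which holds because $X_\Sigma$ is a variety, not a point — so $\Ext^1_S(S/B(\Sigma)^{[l]}, S(D)) = 0$ by the standard depth/codimension estimate ($\Ext^j(S/I, S) = 0$ for $j < \operatorname{grade}(I)$). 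Adding these two vanishing terms in lets the sum start at $j=0$, yielding exactly Equation~\ref{L22}.

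The main obstacle is not the algebra but making the "$l \gg 0$" uniform: Equation~\ref{EMSmain} holds for each fixed cohomological degree once $l$ is large enough, and one must observe that there are only finitely many relevant degrees ($i = 1, \dots, n$), so a single $l$ — the maximum of the finitely many bounds from Equation~\ref{EMSbound}, all of which are controlled by the same expression since $D$ is fixed — works simultaneously for the whole alternating sum. One should also remark that the codimension-$\ge 2$ condition on $B(\Sigma)$ used to kill the $\Ext^1$ term is automatic: the variety $B(\Sigma)^{[l]}$ defines in $\affine{d}$ is the union of the coordinate subspaces indexed by the non-faces of $\Sigma$, and completeness of $\Sigma$ forces every such subspace to have codimension $\ge 2$. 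With these two points noted, the reindexation above is the complete argument.
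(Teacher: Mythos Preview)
Your proposal is correct and follows exactly the route the paper takes: invoke \cite{EMS} for $i\ge 1$, use $h^0(D)=s_D$, and observe that $\Ext^0$ and $\Ext^1$ vanish so the sum can start at $j=0$. The paper compresses all of this into a single sentence, while you spell out the reindexing and the reasons for the two vanishings; one minor remark is that the codimension~$\ge 2$ condition on $B(\Sigma)$ does not really need completeness --- it already follows from the fact that every $\rho\in\Sigma(1)$ is a cone of $\Sigma$, so minimal nonfaces have size $\ge 2$ and the primary components of $B(\Sigma)$ all have height $\ge 2$.
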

\begin{proof}
$Ext^{0}_S(S/B(\Sigma)^{[l]},S) = Ext^{1}_S(S/B(\Sigma)^{[l]},S) = 0$,
so this follows from \cite{EMS}.
\end{proof}
\begin{lem}\label{L2}
If $F_\bullet$ is a free resolution for $S/B(\Sigma)^{[l]}$, then
\begin{equation}\label{L23}
\begin{aligned}
\sum\limits_{i=0}^{n+1} (-1)^i ext^{i}_S(S/B(\Sigma)^{[l]},S(D))_0 &= \sum\limits_{i=0}^{d} (-1)^i \dim_{\C}F_i^{\vee}(D)_0\\
 &= \sum\limits_{i=0}^{d} (-1)^i \dim_{\C}(F_i)^{\vee}_D.
\end{aligned}
\end{equation}
\end{lem}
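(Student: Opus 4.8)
The plan is to compute the alternating sum of the $\Ext$ modules by dualizing a free resolution and using the additivity of dimension along exact sequences. Since $F_\bullet$ is a free resolution of $S/B(\Sigma)^{[l]}$, applying $\Hom_S(-,S(D))$ gives a complex $F_\bullet^{\vee}(D)$ whose cohomology is precisely $\Ext^i_S(S/B(\Sigma)^{[l]},S(D))$. Because $S$ is a polynomial ring in $d$ variables, the global dimension is $d$, so we may take $F_\bullet$ to have length at most $d$, i.e. $F_i=0$ for $i>d$; thus both the resolution and its dual are bounded complexes and the indices in the two sums match up after noting that all $\Ext^i$ with $i>n+1$ vanish (the relevant modules are supported in codimension at least two, as already used implicitly in Lemma~\ref{L1}), so extending the left-hand sum from $n+1$ to $d$ changes nothing.

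Next I would invoke the standard homological-algebra fact that for a bounded complex of finite-dimensional vector spaces, the alternating sum of the dimensions of the terms equals the alternating sum of the dimensions of the cohomology. Applying this in each graded piece — here the degree-$0$ piece of the $\Cl(X_\Sigma)$-grading — gives
\[
\sum_{i=0}^{d}(-1)^i\dim_{\C}\bigl(F_i^{\vee}(D)\bigr)_0 = \sum_{i=0}^{d}(-1)^i\dim_{\C}\Ext^i_S(S/B(\Sigma)^{[l]},S(D))_0,
\]
which is the first claimed equality. For the second equality it remains to identify $\bigl(F_i^{\vee}(D)\bigr)_0$ with $(F_i)^{\vee}_D$: writing $F_i=\bigoplus_j S(-\beta_{ij})$ for twists $\beta_{ij}\in\Cl(X_\Sigma)$, we have $F_i^{\vee}=\Hom_S(F_i,S)=\bigoplus_j S(\beta_{ij})$, so $F_i^{\vee}(D)=\bigoplus_j S(\beta_{ij}+D)$, and its degree-$0$ part has dimension $\sum_j s_{\beta_{ij}+D}$. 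On the other hand $(F_i)^{\vee}$ in the sense of the dual free module, graded so that its graded piece in degree $\alpha$ is $\Hom$ of $F_i$ into $S$ shifted appropriately, evaluated at $D$, gives the same sum $\sum_j s_{\beta_{ij}+D}$; this is just unwinding the two conventions for the grading on a dual module, and the notation $(F_i)^{\vee}_D$ in the statement is defined precisely to make this hold.

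The only real subtlety is bookkeeping with the $\Cl(X_\Sigma)$-grading under dualization: one must be careful that $\Hom_S(S(-\beta),S)\cong S(\beta)$ as graded modules, and that taking the degree-$0$ piece after the twist by $D$ agrees with taking the degree-$D$ piece of the untwisted dual. Everything else is the Euler-characteristic-is-homotopy-invariant principle applied to a bounded complex of (infinite-dimensional but, degreewise, finite-dimensional) graded vector spaces, which is valid because in any fixed multidegree all the modules involved are finite-dimensional over $\C$. I expect the main point requiring care — though it is routine once set up — is simply making the grading conventions in the two displayed lines consistent; there is no geometric obstacle here, as the lemma is a purely algebraic restatement.
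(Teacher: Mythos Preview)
Your argument is exactly the paper's: the proof there consists of the single phrase ``Take Euler characteristics,'' and you have correctly spelled out what that entails. One small correction worth flagging: the vanishing of $ext^i_S(S/B(\Sigma)^{[l]},S(D))_0$ for $i>n+1$ does not follow from the support having codimension $\ge 2$ --- that argument gives the vanishing of $\Ext^0$ and $\Ext^1$, which is what Lemma~\ref{L1} actually uses --- but rather, under the standing hypothesis $l\gg 0$, from the Eisenbud--Musta\c{t}\v{a}--Stillman isomorphism $\Ext^{i+1}(\cdots)_0\simeq H^i(\mathcal{O}_X(D))$ for $i\ge 1$ combined with Grothendieck vanishing on the $n$-dimensional variety $X_\Sigma$.
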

\begin{proof} Take Euler characteristics. \end{proof}
\begin{lem}\label{L3}
If $F_\bullet$ is a minimal free resolution for $S/B(\Sigma)^{[l]}$, then 
\[
\dim_{\C}(F_i)^{\vee}_D =  \sum\limits_{D' \in \Cl(X_\Sigma)}tor_i^S(S/B(\Sigma)^{[l]},\C)_{D'} \cdot s_{D'+D}.
\]
\end{lem}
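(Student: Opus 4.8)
The plan is to compute the graded dimension $\dim_\C (F_i)^\vee_D$ directly from the structure of the minimal free resolution $F_\bullet$. Since $F_\bullet$ is a free resolution of $S/B(\Sigma)^{[l]}$ over the $\Cl(X_\Sigma)$-graded polynomial ring $S$, each term is a finite direct sum of twists, $F_i = \bigoplus_j S(-\beta_{ij})$ for certain degrees $\beta_{ij} \in \Cl(X_\Sigma)$, and the number of summands with a given twist $\beta$ is precisely the graded Betti number $\beta_{i,\beta} = \dim_\C \Tor_i^S(S/B(\Sigma)^{[l]},\C)_\beta$; this is the standard fact that minimality means the differentials have entries in the maximal ideal, so tensoring with $\C$ kills all differentials and leaves $\Tor$ as the vector space spanned by the basis of $F_\bullet$. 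First I would record this identification, so that $F_i \cong \bigoplus_{D' \in \Cl(X_\Sigma)} S(-D')^{\,\oplus \tau_i(D')}$ with $\tau_i(D') = \tor_i^S(S/B(\Sigma)^{[l]},\C)_{D'}$, the sum being finite.

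Next I would dualize. For a single free module, $S(-D')^\vee = \Hom_S(S(-D'),S) \cong S(D')$, and dualizing commutes with finite direct sums, so $(F_i)^\vee \cong \bigoplus_{D'} S(D')^{\,\oplus \tau_i(D')}$. Taking the graded piece in degree $D$ gives $(F_i)^\vee_D \cong \bigoplus_{D'} (S(D')_D)^{\oplus \tau_i(D')} = \bigoplus_{D'} (S_{D'+D})^{\oplus \tau_i(D')}$, since $S(D')_D = S_{D+D'}$ by definition of the shift. Taking $\C$-dimensions and using that $\dim_\C$ is additive over direct sums yields
\[
\dim_\C (F_i)^\vee_D = \sum_{D' \in \Cl(X_\Sigma)} \tau_i(D')\cdot s_{D'+D} = \sum_{D' \in \Cl(X_\Sigma)} \tor_i^S(S/B(\Sigma)^{[l]},\C)_{D'}\cdot s_{D'+D},
\]
which is the assertion. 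Each $s_{D'+D} = \dim_\C S_{D'+D}$ is finite because $X_\Sigma$ is complete (so the graded pieces of the Cox ring are finite-dimensional), and the sum is finite since only finitely many $D'$ have $\tau_i(D') \neq 0$.

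The only genuine point requiring care — and the step I expect to be the main obstacle — is bookkeeping with the direction of the grading shift under $\Hom_S(-,S)$: one must check that $\Hom_S(S(-D'),S) \cong S(D')$ with the correct sign, rather than $S(-D')$, and that this is compatible with the notation $F_i^\vee(D)_0 = (F_i)^\vee_D$ already used in Lemma \ref{L2}. Once the conventions are pinned down, the proof is the routine computation above; there is no homological subtlety beyond the definition of minimality and the resulting identification of $\Tor$ with the Betti numbers.
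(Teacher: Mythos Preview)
Your proposal is correct and follows exactly the same approach as the paper: write $F_i \cong \bigoplus_{D'} S(-D')^{r_i(D')}$ with $r_i(D') = \tor_i^S(S/B(\Sigma)^{[l]},\C)_{D'}$, then dualize and read off the degree-$D$ piece. The paper's proof is a terse two-line version of what you wrote, omitting the sign-of-shift and finiteness remarks you included.
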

\begin{proof}
Let $F_\bullet$ be a minimal free resolution for $S/B(\Sigma)^{[l]}$, and 
\[
r_i(D') = tor_i^S(S/B(\Sigma)^{[l]},\C)_{D'}.
\]
Then 
\[
F_i = \bigoplus\limits_{D' \in \Cl(X_\Sigma)} S(-D')^{r_i(D')}.
\]
Now dualize and take the shift by $D$ into account.
\end{proof}
\section{Combinatorial commutative algebra}\label{sec:three}

\subsection*{Taylor resolution}
We now observe that the multigraded betti numbers $r_i(D')$ of
$S/B(\Sigma)^{[l]}$ can be replaced with related numbers which 
arise from a Taylor resolution for $S/B(\Sigma)$.
The Taylor resolution \cite{T} of a monomial ideal is a variant
of the Koszul complex, which takes into account the LCM's of the monomials
involved. 

Let $I = \langle m_1, \ldots,m_k \rangle$ be a monomial ideal,
and consider a complete simplex with vertices labelled by the $m_i$, 
and each $n$-face $F$ labelled with the LCM of the $n+1$ monomials corresponding
to vertices of $F$. Define a chain complex where the differential on an $n$-face 
$F = [v_{i_0},\ldots, v_{i_n}]$ is
\[
d(F) = \sum\limits_{j = 0}^n (-1)^{j}\frac{m_F}{m_{F\setminus v_{i_j}}} F\setminus v_{i_j},
\]
with $m_{F}$ denoting the monomial labelling face $F$. 
As shown by Taylor, this complex is actually a resolution 
(though often nonminimal) of $I$. 
When the $m_i$ are squarefree, the LCM of a subset of $l^{th}$ powers is the 
$l^{th}$ power of the LCM of the original monomials, hence the Taylor 
resolution for $I^{[l]}$ is given by the $l^{th}$ power of the Taylor
resolution for $I$, in the sense that a summand $S(-\alpha)$ in the free
resolution for $I$ is replaced with $S(-l\cdot \alpha)$ in the resolution 
for $I^{[l]}$. 

Thus, the Taylor resolution of $S/B(\Sigma)$ determines
the Taylor resolution of $S/B(\Sigma)^{[l]}$. The formula in Lemma~\ref{L3}
requires a minimal free resolution, which the Taylor resolution is
generally not. However, this is no obstacle:
\begin{lem}\label{L4}
If $F_\bullet$ is a free resolution for $S/B(\Sigma)$, then
\[
\sum\limits_{i=0}^{n+1} (-1)^i
ext^{i}_S(S/B(\Sigma)^{[l]},S(D))_0 =  \sum\limits_{i=0}^{d} (-1)^i  \sum\limits_{D' \in \Cl(X_\Sigma)}tor_i^S(S/B(\Sigma),\C)_{D'} \cdot s_{l \cdot D'+D}.
\]
\end{lem}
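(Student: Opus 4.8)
The plan is to compute the left-hand side by feeding the \emph{Taylor} resolution into Lemmas~\ref{L2} and~\ref{L3}, and to check that the failure of the Taylor resolution to be minimal is harmless once everything is rewritten as an alternating sum. Let $T_\bullet$ be the Taylor resolution of $S/B(\Sigma)$, so $T_i = \bigoplus_{|\tau|=i} S(-D_\tau)$, the sum over $i$-element subsets $\tau$ of the set of maximal cones of $\Sigma$, where $D_\tau \in \Cl(X_\Sigma)$ denotes the class of $\mathrm{lcm}(x^{\hat\sigma} : \sigma \in \tau)$ (with $D_\emptyset = 0$, $T_0 = S$). Because the generators $x^{\hat\sigma}$ of $B(\Sigma)$ are squarefree, the $l$-th-power construction recalled above gives the Taylor resolution $T^{[l]}_\bullet$ of $S/B(\Sigma)^{[l]}$ as $T^{[l]}_i = \bigoplus_{|\tau|=i} S(-l\cdot D_\tau)$, a (non-minimal) free resolution of length at most $d$. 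Applying Lemma~\ref{L2} with $F_\bullet = T^{[l]}_\bullet$, and then computing $\dim_\C(T^{[l]}_i)^\vee_D$ exactly as in the proof of Lemma~\ref{L3} — where minimality served only to name the twists, not for the computation $\dim_\C(S(-lD'))^\vee_D = \dim_\C S(lD')_D = s_{l\cdot D'+D}$ — yields
\[
\sum_{i=0}^{n+1}(-1)^i ext^i_S(S/B(\Sigma)^{[l]}, S(D))_0 \;=\; \sum_{i=0}^{d}(-1)^i \sum_{D'\in\Cl(X_\Sigma)} t_i(D')\cdot s_{l\cdot D'+D},
\]
where $t_i(D') = \#\{\tau : |\tau|=i,\, D_\tau = D'\}$ is the twisted rank of $T_i$ in class $D'$.

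Next I would show that for each fixed $D' \in \Cl(X_\Sigma)$ one has $\sum_i(-1)^i t_i(D') = \sum_i(-1)^i tor_i^S(S/B(\Sigma),\C)_{D'}$; interchanging the two summations in the display above then produces exactly the right-hand side of the lemma. For this identity, note that $tor_i^S(S/B(\Sigma),\C) = H_i(T_\bullet\otimes_S\C)$ since $T_\bullet$ resolves $S/B(\Sigma)$, and that $T_\bullet\otimes_S\C$ is, in each $\Cl(X_\Sigma)$-degree $D'$, a finite complex of finite-dimensional $\C$-vector spaces whose $i$-th term has dimension $t_i(D')$; the Euler characteristic of its degree-$D'$ strand is therefore equal both to $\sum_i(-1)^i t_i(D')$ and to $\sum_i(-1)^i \dim_\C H_i(T_\bullet\otimes_S\C)_{D'} = \sum_i(-1)^i tor_i^S(S/B(\Sigma),\C)_{D'}$. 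Equivalently, this alternating sum of twisted ranks is a $K$-theoretic invariant, independent of the chosen free resolution of $S/B(\Sigma)$, which is why the statement may be phrased for an arbitrary such $F_\bullet$.

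I expect the only real work to be bookkeeping rather than anything deep: one must track the coarsening of the natural $\Z^d$-grading on $S/B(\Sigma)$ to the $\Cl(X_\Sigma)$-grading along $\phi$ (so that $D_\tau$, a priori a monomial multidegree in $\Z^d$, is read as its image in $\Cl(X_\Sigma)$, in which grading $s_{l\cdot D'+D}$ is defined and finite because $X_\Sigma$ is complete), and one must invoke squarefreeness of the $x^{\hat\sigma}$ at precisely the point where $\mathrm{lcm}$ of $l$-th powers is identified with the $l$-th power of the $\mathrm{lcm}$. The single conceptual ingredient — and the potential pitfall if one is careless — is the standing principle that alternating sums of twisted Betti numbers are computed equally well by the non-minimal Taylor resolution and by a minimal one, which is what legitimizes combining Lemmas~\ref{L2} and~\ref{L3} with the Taylor resolution despite the word ``minimal'' appearing in Lemma~\ref{L3}.
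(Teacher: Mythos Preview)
Your proposal is correct and follows essentially the same route as the paper: feed the $l$-th power of the Taylor resolution of $S/B(\Sigma)$ into Lemma~\ref{L2}, then invoke the Euler-characteristic principle that alternating sums of twisted ranks coincide for any two free resolutions, which is exactly the paper's remark that ``the nonminimal summands cancel out.'' Your write-up is in fact more explicit than the paper's in spelling out why $\sum_i(-1)^i t_i(D')=\sum_i(-1)^i tor_i^S(S/B(\Sigma),\C)_{D'}$ via the degree-$D'$ strand of $T_\bullet\otimes_S\C$; the only cosmetic slip is the claim that $T^{[l]}_\bullet$ has length at most $d$ (its length is the number of maximal cones, which can exceed $d$), but this is harmless since the $\Tor$ groups vanish beyond $d$ and the alternating sum is unaffected.
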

\begin{proof}
If $F_\bullet$ is a minimal resolution of  $S/B(\Sigma)^{[l]}$, then Lemmas~\ref{L2} and \ref{L3} yield  
 \[
\sum\limits_{i=0}^{n+1} (-1)^i
ext^{i}_S(S/B(\Sigma)^{[l]},S(D))_0 = \sum\limits_{i=0}^{d} (-1)^i  \sum\limits_{D' \in \Cl(X_\Sigma)}tor_i^S(S/B(\Sigma)^{[l]},\C)_{D'} \cdot s_{D'+D}.
\]
Lemma~\ref{L2} shows that the $l^{th}$ power of a Taylor 
resolution for $S/B(\Sigma)$ can be used to compute the left-hand side. 
Furthermore, when $F_\bullet$ is non-minimal, in the
expression 
\[
\sum\limits_{i=0}^{d} (-1)^i \dim_{\C}(F_i)^{\vee}_D
\]
the nonminimal summands cancel out, hence we may pass back to the description in terms of $\Tor$, yielding
the result. 
\end{proof}
\subsection*{Alexander duality and monomial ideals}
Let $\Delta$ be a simplicial complex on vertex set $\{1,\ldots,d
\}$. Let $S= \Z[x_1,\ldots,x_d]$ be a polynomial ring, with variables
corresponding to the vertices of $\Delta$. 
\begin{defn}\label{SR}
The Stanley-Reisner ideal $I_\Delta \subseteq S$ is the ideal 
generated by all monomials corresponding to nonfaces of $\Delta$:
\[
I_\Delta = \langle x_{i_1}\cdots x_{i_k} | [i_1,\ldots,i_k] \mbox{ is not a face of }\Delta \rangle.
\]
\end{defn}
The Stanley-Reisner ring is $S/I_\Delta$. 
The intersection of a complete simplicial fan $\Sigma \subseteq \R^n$ 
with the unit sphere $S^{n-1}$ gives a
simplicial complex we denote by $P_{\Sigma}$; define 
$I_\Sigma$ as the Stanley-Reisner ideal of $P_{\Sigma}$.
\begin{defn}
If $\Delta$ is a simplicial complex on $[d]= \{1,\ldots d\}$, then
the Alexander dual $\Delta^{\vee}$ is a simplicial complex consisting
of the complements of the nonfaces of $\Delta$:
\[
\Delta^{\vee} = \{ [d] \setminus \sigma | \sigma \not \in \Delta \}.
\]
\end{defn}
\begin{exm}\label{FirstExample}
The Hirzebruch surface ${\mathcal H}_2$ corresponds to the 
fan in Figure~1.
\begin{figure}
\begin{center}
\includegraphics[height=1.4in]{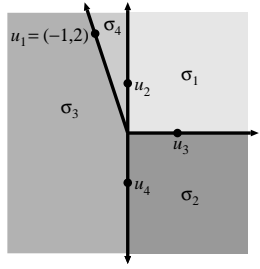}
\end{center}
\caption{The fan for ${\mathcal H}_2$} 
\end{figure}
Since  $[u_2,u_4]$ and $[u_1,u_3]$ are nonfaces of $\Sigma$, and every
other nonface such as $[u_1,u_2,u_4]$ contains them, the Stanley-Reisner
ideal is
\[
I_\Sigma = \langle x_1x_3, x_2x_4 \rangle.
\]
The Alexander dual $\Sigma^{\vee}$ contains all $\rho \in \Sigma(1)$.
Since $\widehat{u_1u_3} = [u_2,u_4]$ and $\widehat{u_2u_4} = [u_1,u_3]$, 
$\Sigma^{\vee}(2) = \{[u_2,u_4], [u_1,u_3]\}$. So 
\[
I_{\Sigma^{\vee}} = \langle x_1x_2, x_1x_4,x_2x_3,x_3x_4  \rangle.
\]
\end{exm}
\begin{lem}\label{L5}
The toric irrelevant ideal $B(\Sigma)$ is Alexander dual to the
Stanley-Reisner ideal $I_\Sigma$.
\end{lem}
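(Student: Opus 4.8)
The plan is to deduce the statement from the standard combinatorial description of Alexander duality for squarefree monomial ideals. Recall that by definition $I_\Sigma = I_{P_\Sigma}$, and that the faces of the simplicial complex $P_\Sigma$ on vertex set $\Sigma(1)$ are exactly the ray sets $\sigma(1)$ of the cones $\sigma \in \Sigma$; thus the nonfaces of $P_\Sigma$ are the subsets of $\Sigma(1)$ not spanning a cone, and the generators of $I_\Sigma$ are the corresponding squarefree monomials. I will also use the remark made right after the definition of $B(\Sigma)$, namely that $B(\Sigma)$ is minimally generated by the monomials $x^{\hat\sigma}$ attached to the complements of the \emph{maximal} faces $\sigma \in \Sigma$; the generators coming from non-maximal $\sigma$ are redundant because $\sigma \subseteq \tau$ forces $\hat\tau \subseteq \hat\sigma$, so $x^{\hat\tau}$ divides $x^{\hat\sigma}$.

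The one point needing a (short, bookkeeping) argument is the following description of $I_{\Delta^\vee}$ for an arbitrary simplicial complex $\Delta$ on $[d]$: straight from the definition $\Delta^\vee = \{[d]\setminus\sigma \mid \sigma\notin\Delta\}$ one reads off that $\tau \in \Delta^\vee$ if and only if $[d]\setminus\tau \notin \Delta$. Since $\Delta$ is closed under passing to subsets, non-membership in $\Delta$ is closed under passing to supersets, and this both confirms that $\Delta^\vee$ is genuinely a simplicial complex and identifies the nonfaces of $\Delta^\vee$ with the complements of the faces of $\Delta$. Passing to minimal nonfaces -- legitimate since enlarging a face only shrinks its complement -- shows that the minimal generators of $I_{\Delta^\vee}$ are exactly the squarefree monomials $\prod_{\rho\notin F}x_\rho$ as $F$ ranges over the facets of $\Delta$. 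This is the main (and essentially only) obstacle, and it is entirely formal.

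Finally I will specialize to $\Delta = P_\Sigma$. Its facets are the maximal cones of $\Sigma$, and for such a cone $\sigma$ the complement of its ray set is $\{\rho \in \Sigma(1) \mid \rho \notin \sigma(1)\}$, so $\prod_{\rho\notin\sigma(1)}x_\rho = x^{\hat\sigma}$. Hence $I_{(P_\Sigma)^\vee} = \langle x^{\hat\sigma} \mid \sigma\in\Sigma \text{ maximal}\rangle = B(\Sigma)$ by the remark above. Since Alexander duality is an involution, $(\Delta^\vee)^\vee = \Delta$, so the relation ``is Alexander dual to'' is symmetric; thus the equality $B(\Sigma) = I_{(P_\Sigma)^\vee}$ is precisely the assertion that $B(\Sigma)$ and $I_\Sigma = I_{P_\Sigma}$ are Alexander dual, which is the lemma.
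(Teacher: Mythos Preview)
Your proof is correct and reaches the same conclusion as the paper's, but by a slightly different (and more self-contained) path. The paper invokes the primary-decomposition description of Alexander duality: it writes $I_\Sigma = \bigcap_{\text{min.\ cofaces}} \langle x_{i_1},\ldots,x_{i_k}\rangle$ and then appeals to the standard fact (Miller--Sturmfels, Prop.~1.35) that monomializing this decomposition yields $I_{\Sigma^\vee}$, whose generators are therefore the monomials supported on minimal cofaces, i.e.\ complements of maximal faces. You instead work directly from the definition of $\Delta^\vee$ to show that the minimal nonfaces of $\Delta^\vee$ are exactly the complements of the facets of $\Delta$, which gives the generators of $I_{\Delta^\vee}$ without citing an external result. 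Both arguments hinge on the same observation---generators of the dual ideal correspond to complements of maximal faces---so the difference is one of packaging: the paper's version is terser but leans on a reference, while yours is elementary and fully unpacked.
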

\begin{proof}
The Alexander dual $I_{\Sigma^{\vee}}$ to $I_{\Sigma}$ is obtained by monomializing
(\cite{MS}, Proposition 1.35) a primary decomposition for $I_{\Sigma}$. If $MC(\Sigma)$ denotes the set of minimal cofaces of $\Sigma$, then the primary
decomposition of $I_\Sigma$ is 
\[
 I_{\Sigma} = \bigcap\limits_{[{i_1},\ldots,{i_k}] \in MC(\Sigma)}\langle x_{i_1},\ldots,x_{i_k}\rangle.
\]
The ideal $I_{\Sigma^{\vee}}$ is generated by monomials corresponding to minimal cofaces, which are complements to maximal faces, hence $I_{\Sigma^{\vee}} = B(\Sigma)$.
\end{proof}
\begin{thm}[Danilov \cite{danilov}, Jurkiewicz \cite{jurk}]
For a complete simplicial fan $\Sigma$, let 
$J = \langle div(\chi^{\bf m}) | {\bf m} \in M \rangle$. The 
rational Chow ring $Ch(X_\Sigma)$ is the rational 
Stanley-Reisner ring of $\Sigma$, modulo $J$. 
\end{thm}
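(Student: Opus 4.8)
The plan is to establish the Danilov--Jurkiewicz description of $Ch(X_\Sigma)_{\mathbb{Q}}$ by identifying the Stanley--Reisner relations and the linear relations as exactly the relations among the divisor classes $[D_\rho]$, and then to produce a dimension count showing no further relations can occur. First I would recall that for a complete simplicial fan the torus-invariant prime divisors $D_\rho$ generate $\Cl(X_\Sigma)_{\mathbb{Q}}$, and that $Ch(X_\Sigma)_{\mathbb{Q}}$ is generated as a $\mathbb{Q}$-algebra by the classes $[D_\rho]$, $\rho \in \Sigma(1)$; this is standard and reduces the problem to identifying the kernel of the surjection $\mathbb{Q}[x_\rho \mid \rho \in \Sigma(1)] \twoheadrightarrow Ch(X_\Sigma)_{\mathbb{Q}}$ sending $x_\rho \mapsto [D_\rho]$. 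The claim is that this kernel is $I_\Sigma + J$.

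Second, I would check the two containments $I_\Sigma \subseteq \ker$ and $J \subseteq \ker$. For $J$: for any ${\bf m} \in M$, the principal divisor $\mathrm{div}(\chi^{\bf m}) = \sum_\rho \langle m, v_\rho\rangle D_\rho$ is rationally equivalent to $0$, so it is the zero class in $Ch(X_\Sigma)_{\mathbb{Q}}$, i.e. $J \subseteq \ker$; this is precisely the presentation $0 \to M \stackrel{\psi}{\to} \mathbb{Z}^d \stackrel{\phi}{\to} \Cl(X_\Sigma) \to 0$ from \S1 tensored with $\mathbb{Q}$. For $I_\Sigma$: if $\{\rho_1,\ldots,\rho_k\}$ do not span a cone of $\Sigma$, then the divisors $D_{\rho_1},\ldots,D_{\rho_k}$ have empty intersection (the relevant affine charts do not overlap), so the product $[D_{\rho_1}]\cdots[D_{\rho_k}] = 0$ in $Ch(X_\Sigma)_{\mathbb{Q}}$ by the geometric interpretation of intersection product; hence the generators of $I_\Sigma$ lie in $\ker$. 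So there is a surjection $\bigl(\mathbb{Q}[x_\rho]/(I_\Sigma + J)\bigr) \twoheadrightarrow Ch(X_\Sigma)_{\mathbb{Q}}$.

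Third --- and this is the main obstacle --- I would show this surjection is an isomorphism by comparing dimensions degree by degree, or equivalently by showing injectivity. The cleanest route is a Hilbert-series/dimension count: $S/I_\Sigma$ is Cohen--Macaulay of Krull dimension $n$ (the fan is complete, hence $P_\Sigma$ is a Gorenstein* or at least Cohen--Macaulay complex of the right dimension by Reisner's criterion), and the $n$ linear forms coming from a basis of $M$ form a system of parameters because completeness of $\Sigma$ forces the matrix $A$ to have the property that no proper coordinate subspace meets all the cones --- concretely, the linear system of parameters condition translates to: for every face $\sigma \in P_\Sigma$, the images of $\{v_\rho : \rho \notin \sigma\}$ still span, which holds since $\Sigma$ is complete. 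Therefore $S/(I_\Sigma + J)$ is an Artinian reduction of a Cohen--Macaulay ring and its total dimension over $\mathbb{Q}$ equals the $h$-vector sum of $P_\Sigma$, which is known (via Euler relations for complete fans, e.g. the Dehn--Sommerville equations) to equal the topological Euler data matching $\dim_{\mathbb{Q}} Ch(X_\Sigma)_{\mathbb{Q}} = \sum_i \dim Ch^i = $ (number of maximal cones of $\Sigma$). An independent computation of $\dim_{\mathbb{Q}} Ch(X_\Sigma)_{\mathbb{Q}}$ --- for instance via the cellular structure giving $Ch^i(X_\Sigma)_{\mathbb{Q}}$ rank equal to the $h_i$ of $P_\Sigma$, using the torus-orbit stratification --- then forces the surjection to be an isomorphism. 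The delicate point to get right is verifying that $M$ (tensored with $\mathbb{Q}$) really maps to a linear system of parameters for $S/I_\Sigma$, which is where completeness of $\Sigma$ is essential; once that is in hand, the Cohen--Macaulay property upgrades the surjection of graded vector spaces into an isomorphism of graded rings.
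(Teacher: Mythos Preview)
The paper does not supply its own proof of this theorem: it is stated with attribution to Danilov and Jurkiewicz and then used as input to the later discussion. So there is no in-paper argument to compare your proposal against.

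On its own merits, your outline follows the standard route and is essentially correct. The containments $J \subseteq \ker$ and $I_\Sigma \subseteq \ker$ are argued properly, and your plan to finish by a Hilbert-series/dimension count (using that $S/I_\Sigma$ is Cohen--Macaulay of dimension $n$ and that a basis of $M$ maps to a linear system of parameters, so the Artinian reduction has total dimension $\sum h_i = f_{n-1} = |\Sigma(n)|$) is the right idea.

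One point to tighten: the ``independent computation'' of $\dim_{\mathbb{Q}} Ch(X_\Sigma)_{\mathbb{Q}}$ is where the argument is thinnest. Saying that $\dim Ch^i = h_i$ via the orbit stratification is close to assuming what you want to prove, since that equality is typically established as part of the Danilov--Jurkiewicz package. A cleaner way to close the loop is to show directly that the classes $[V(\sigma)]$, $\sigma \in \Sigma(n)$, span $Ch(X_\Sigma)_{\mathbb{Q}}$ additively (orbit closures generate the Chow groups, and with the linear relations from $J$ one can move everything into top-cone classes), so $\dim_{\mathbb{Q}} Ch(X_\Sigma)_{\mathbb{Q}} \le |\Sigma(n)|$; then your surjection from an algebra of the same dimension forces equality and hence isomorphism. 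Alternatively, in the smooth projective case one can cite a Bia\l ynicki-Birula/cellular decomposition to get the Betti numbers independently, but for the general simplicial case you should be explicit about which independent bound on $\dim Ch$ you are invoking so the argument is not circular.
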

The ideal $J$ is minimally generated by a regular sequence; it is these
linear forms which encode the geometry of $\Sigma$. 
To interpret the Euler characteristic of $\mathcal{O}_X(D)$ in 
terms of intersection
theory, we must change computations involving
the toric irrelevant ideal into computations involving the Stanley-Reisner
ideal. For a polynomial ring $R=\C[x_1,\ldots,x_d]$ endowed with 
the fine (also called $\Z^d$) grading $\deg(x_i) = {\bf e}_i \in \Z^d$ and 
squarefree monomial ideal $M$, the following result of 
Musta\c{t}\v{a} [\cite{M1}, Corollary 3.1] provides the bridge:
\begin{equation}\label{MirceaMonomial}
Tor_i^R(M^\vee, \C)_{{\bf m}} \simeq Ext_R^{|{\bf m}|-i}(R/M,R)_{-{\bf m}} \mbox{ if }{\bf m} \in \{0,1\}^d, \mbox{ else }0.
\end{equation}
Letting $Z =\{0,1\}^d$, applying 
Musta\c{t}\v{a}'s result yields:
\begin{equation}\label{MirceaMonomial2}
\begin{aligned}
tor_i^S(S/B(\Sigma),\C)_{D'} &= \sum\limits_{\stackrel{{\bf m} \in Z,}{ \phi({\bf m}) = D'}}tor_i^S(S/B(\Sigma),\C)_{{\bf m}}\\
 &= \sum\limits_{\stackrel{{\bf m} \in Z,}{\phi({\bf m}) = D'}}ext^{|{\bf m}|-i+1}_S(S/I_{\Sigma},S)_{-{\bf m}}
\end{aligned}
\end{equation}
\begin{lem}\label{L6}For a complete fan $\Sigma \subseteq N\otimes_{\Z}\R \simeq \R^n$ with $|\Sigma(1)|=d$,
\begin{enumerate}
\item
$Ext^j_S(S/I_{\Sigma},S) = 0 \mbox{ for all }j \ne d-n$.
\item In the $\Z^d$ grading, $Ext^{d-n}_S(S/I_{\Sigma},S) \simeq S/I_{\Sigma}({\bf 1})$.
\end{enumerate}
\end{lem}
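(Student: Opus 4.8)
The statement asserts that $S/I_\Sigma$ is Cohen-Macaulay of codimension $d-n$ and, moreover, that its canonical-type module $\Ext^{d-n}_S(S/I_\Sigma,S)$ is isomorphic, in the fine grading, to a shift of $S/I_\Sigma$ itself by ${\bf 1}$. The plan is to reduce both parts to standard facts about Stanley-Reisner rings of spheres. First I would recall that $P_\Sigma$, the intersection of the complete simplicial fan $\Sigma$ with $S^{n-1}$, is a simplicial complex that triangulates the $(n-1)$-sphere; this is where completeness of $\Sigma$ is used. A triangulation of $S^{n-1}$ is in particular a Cohen-Macaulay complex (indeed Gorenstein), so by Reisner's criterion $S/I_\Sigma$ is a Cohen-Macaulay ring of Krull dimension $n$, hence of codimension $d-n$ in $S=\C[x_\rho]$. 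For a Cohen-Macaulay module the $\Ext$ modules $\Ext^j_S(S/I_\Sigma,S)$ vanish for $j\ne \codim = d-n$, which is part (1).

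For part (2), the point is that $P_\Sigma$ is not merely Cohen-Macaulay but Gorenstein, because it is a homology sphere. The nonzero $\Ext$ module $\Ext^{d-n}_S(S/I_\Sigma,S)$ is (a twist of) the canonical module $\omega_{S/I_\Sigma}$, and for a Gorenstein ring the canonical module is free of rank one, i.e. isomorphic to the ring itself up to a grading shift. The only work is to pin down that the shift is exactly ${\bf 1}=(1,\ldots,1)$ in the $\Z^d$-grading. I would get this from Hochster's formula, or equivalently from the fine-graded local cohomology of $S/I_\Sigma$: for a triangulated sphere the top local cohomology $H^n_{\mathfrak m}(S/I_\Sigma)$ has fine Hilbert series concentrated so that Local Duality gives $\omega_{S/I_\Sigma}\simeq S/I_\Sigma$ with the grading reflected and shifted by ${\bf 1}$, the total degree of $x_1\cdots x_d$. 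Alternatively, one can invoke Musta\c{t}\v{a}'s formula~\eqref{MirceaMonomial} directly: apply it with $M=I_\Sigma$, whose Alexander dual is $M^\vee = B(\Sigma)$ by Lemma~\ref{L5}, and compare against the known Betti numbers of $B(\Sigma)$; but the cleanest route is the Gorenstein/Local Duality one.

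The main obstacle is bookkeeping with the two gradings and the precise shift. It is easy to see that $\Ext^{d-n}_S(S/I_\Sigma,S)$ is a free rank-one $S/I_\Sigma$-module; identifying the twist as exactly $S/I_\Sigma({\bf 1})$ and not some other shift requires care. I would handle this by a degree count: in the $\Z^d$-grading, $\Ext^{d-n}_S(S/I_\Sigma,S)$ is, by local duality, the graded $\C$-dual of $H^n_{\mathfrak m}(S/I_\Sigma)$ twisted by the canonical module $\omega_S = S(-{\bf 1})$ of the polynomial ring; since $H^n_{\mathfrak m}(S/I_\Sigma)$ for a sphere triangulation is, degree by degree, the dual of $(S/I_\Sigma)$ with signs reversed and supported in nonpositive multidegrees, combining the two dualizations and the shift by $-{\bf 1}$ from $\omega_S$ leaves exactly a shift by $+{\bf 1}$. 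A small consistency check on the Hirzebruch example $I_\Sigma=\langle x_1x_3,x_2x_4\rangle$ (where $S/I_\Sigma$ is a complete intersection, hence Gorenstein, with socle degree visibly $(1,1,1,1)$) confirms the shift.
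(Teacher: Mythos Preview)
Your proposal is correct and follows essentially the same approach as the paper: both reduce to the fact that $P_\Sigma$ is a simplicial $(n-1)$-sphere, hence $S/I_\Sigma$ is Gorenstein of codimension $d-n$, so the only issue is the shift. The paper pins down the shift by applying Hochster's formula to show $\Tor_{d-n}^S(S/I_\Sigma,\C)_{\bf 1}\simeq \widetilde{H}^{n-1}(P_\Sigma,\C)\neq 0$, which forces the last free module in the resolution to be $S(-{\bf 1})$; your local-duality bookkeeping is an equivalent way to reach the same conclusion, and you also mention Hochster as an option.
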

\begin{proof}
From Definition~\ref{SR}, $I_{\Sigma}$ is the Stanley-Reisner ideal of 
the simplicial sphere $P_{\Sigma}$, which is Gorenstein by 
Corollary II.5.2 of \cite{S}. Since $\dim P_{\Sigma} = n-1$, 
\[
\codim(I_{\Sigma}) = (d-1)-(n-1) = d-n.
\]
Everything follows from this, save that $S/I_{\Sigma}$ is shifted by ${\bf 1}$. 
The Gorenstein property means the minimal free resolution of 
$S/I_{\Sigma}$ is of the form
\[
0 \longrightarrow S(-\alpha) \stackrel{\partial_{d-n}}{\longrightarrow}
\bigoplus\limits_{j=1}^kS(-\beta_j) \stackrel{\partial_{d-n-1}} {\longrightarrow} \cdots \longrightarrow \bigoplus\limits_{j=1}^kS(-\gamma_j) \stackrel{[I_{\Sigma}]} {\longrightarrow} S \longrightarrow S/I_{\Sigma} \longrightarrow 0,
\]
where $\partial_{d-n}$ is (up to signs) 
the transpose of the matrix of minimal generators
$[I_{\Sigma}]$. To show that the shift in
$Ext^{d-n}$ is ${\bf 1}$, we use a result of Hochster. 
For a complex $\Delta$ and weight $\alpha$, 
let $\Delta|_{\alpha} = \{\sigma \in \Delta \mid \sigma \subseteq \alpha \}$. 
Equating the multidegree ${\bf 1}$ with the full simplex on all vertices of
$\Delta$, Hochster's formula (5.12 of \cite{MS}) yields
\[
Tor_{d-n}^S(S/I_\Sigma,\C)_{\bf{1}} = \widetilde{H}^{n-1}(\Sigma|_{\bf{1}},\C).
\]
Since $\Sigma|_{\bf{1}} \simeq P_{\Sigma} \simeq S^{n-1}$, the result follows.
\end{proof}
\begin{exm}\label{SecondExample}
The Stanley-Reisner ring for the fan $\Sigma$ of 
Example~\ref{FirstExample} has a $\Z^4$ graded minimal free
resolution
\begin{small}
\[
0 \longrightarrow S(-1,-1,-1,-1) \xrightarrow{\left[ \!
\begin{array}{c}
-x_2x_4\\
x_1x_3
\end{array}\! \right]} \!
\begin{array}{c}
S(-1,0,-1,0)\\
\oplus \\ 
S(0,-1,0,-1)
\end{array}\!
\xrightarrow{\left[ \!\begin{array}{cc}
x_1x_3 & x_2x_4
\end{array}\! \right]}
 S \longrightarrow S/I_\Sigma.
\]
\end{small}

\noindent Thus, $Ext^2(S/I_\Sigma,S) \simeq S(1,1,1,1)/I_\Sigma$.
The simplicial complex $P_\Sigma$ consists of vertices $[1],[2],[3],[4]$
and edges $[12],[23],[34],[41]$ and is homotopic to $S^1$. Since the 
multidegrees are all smaller than ${\bf 1}$ in the pointwise order, 
$\Sigma|_{\bf 1} = P_\Sigma$, so 
\[
\C = \widetilde{H}^1(S^1,\C) = \widetilde{H}^1(\Sigma|_{\bf{1}},\C) = Tor_{2}^S(S/I_\Sigma,\C)_{\bf{1}},
\]
showing the shift $\alpha$ in the last step of the free resolution of
$S/I_\Sigma$ is $S(-{\bf 1})$.
\end{exm}

\section{Proof of Equation~\ref{MAIN}}\label{sec:three}
\noindent We now prove Equation~\ref{MAIN}. By Equation~\ref{L22}, 
\[
\chi(\mathcal{O}_X(D)) = s_{D} - \sum\limits_{i=0}^{n+1} (-1)^i
ext^{i}_S(S/B(\Sigma)^{[l]},S(D))_0.
\]
Let $\gamma({\bf m})\!=\!s_{l \cdot \phi({\bf m})+D}$ and $E\!=\! \sum\limits_{i=0}^{n+1} (-1)^i ext^{i}_S(S/B(\Sigma)^{[l]},S(D))_0$. It suffices to show 
\[
E =s_D + \sum\limits_{{\bf m} \in Z \setminus {\bf 0}}(-1)^{|{\bf m}|-d+n+1} \dim_{\C}(S/I_\Sigma)_{{\bf 1}-{\bf m}}\cdot \gamma({\bf m}).
\]
First, observe that 
\begin{equation}\label{FirstBigEqn}
\begin{aligned}
E &= \sum\limits_{i=0}^{d} (-1)^i \!\!\sum\limits_{D' \in \Cl(X_\Sigma)}\Big(\sum\limits_{\stackrel{{\bf m} \in Z,}{ \phi({\bf m}) = D'}}tor_i^S(S/B(\Sigma),\C)_{{\bf m}}\Big)\cdot \gamma({\bf m}).\\
 &= \sum\limits_{i=0}^{d} (-1)^i\sum\limits_{{\bf m} \in Z}tor_i^S(S/B(\Sigma),\C)_{{\bf m}} \cdot \gamma({\bf m}).\\
 &= s_D+\sum\limits_{i=1}^{d} (-1)^i\sum\limits_{{\bf m} \in Z \setminus {\bf 0}}tor_i^S(S/B(\Sigma),\C)_{{\bf m}} \cdot \gamma({\bf m}).
\end{aligned}
\end{equation}
The first line follows from Lemma~\ref{L4}, the second line is simply a 
rearrangement, and the third line follows from the observation that
\[
s_D = tor_0^S(S/B(\Sigma),\C)_{{\bf 0}} \cdot \gamma({\bf 0}).
\]
For $i \ge 0$, 
\[
Tor^S_i(B(\Sigma),\C) \simeq Tor^S_{i+1}(S/B(\Sigma),\C),
\]
so using Equation~\ref{MirceaMonomial} we may rewrite the last line of
Equation~\ref{FirstBigEqn} as
\begin{equation}\label{AlmostDone}
s_D + \sum\limits_{i=0}^{d-1} (-1)^{i+1} \sum\limits_{{\bf m} \in Z \setminus {\bf 0}}ext^{|{\bf m}|-i}_S(S/I_{\Sigma},S)_{-{\bf m}} \cdot \gamma({\bf m}).
\end{equation}
By Lemma~\ref{L6}, $Ext^{|{\bf m}|-i}_S(S/I_{\Sigma},S)$ is nonzero iff $|{\bf m}|-i = d-n$, and 
\[
Ext^{d-n}_S(S/I_{\Sigma},S) \simeq S/I_{\Sigma}({\bf 1}).
\]
Since the only nonzero terms in Equation~\ref{AlmostDone} occur for
$i = |{\bf m}|-d+n$ we rewrite Equation~\ref{AlmostDone} as
\begin{equation}\label{SecondBigEqn}
\begin{aligned}
&=  s_D + \sum\limits_{{\bf m} \in Z \setminus {\bf 0}}(-1)^{|{\bf m}|-d+n+1}ext^{d-n}_S(S/I_{\Sigma},S)_{-{\bf m}} \cdot  \gamma({\bf m})\\
& =  s_D + \sum\limits_{{\bf m} \in Z \setminus {\bf 0}}(-1)^{|{\bf m}|-d+n+1} \dim_{\C}(S/I_\Sigma)_{{\bf 1}-{\bf m}}\cdot  \gamma({\bf m})
\end{aligned}
\end{equation}
This shows that 
\[
E =s_D + \sum\limits_{{\bf m} \in Z \setminus {\bf 0}}(-1)^{|{\bf m}|-d+n+1} \dim_{\C}(S/I_\Sigma)_{{\bf 1}-{\bf m}}\cdot \gamma({\bf m}),
\]
and Equation~\ref{MAIN} follows. $\Box$
\begin{exm}
Consider the divisor $D=3D_3-5D_4$ on the 
Hirzebruch surface ${\mathcal H}_2$ from Figure~1.
Since the support function for $D$ is not convex, $D$ is not nef.
Thus, computing $\chi(\cO_{{\mathcal H}_2}(D))$ 
involves more than a simple global section computation. 
A direct calculation with Riemann-Roch for surfaces shows
that 
\[
\chi(\cO_{{\mathcal H}_2}(D)) =4.
\]
Using the methods of \S9.4 of \cite{CLS}, it can be shown
that $h^0(D) = 0$, $h^1(D) = 2$, and $h^2(D)=6$. 
Now we illustrate how to apply Equation~\ref{MAIN}. Let
\[
\phi = \begin{pmatrix}
1 & -2 & 1 & 0 \\
0  & 1 & 0 & 1 \end{pmatrix}
\]
so that the Class group is given by
\[
\Z^4 \stackrel{\phi}{\longrightarrow} \Z^2 \simeq Cl({\mathcal H}_2) \longrightarrow 0.
\]
The Eisenbud-Musta\c{t}\v{a}-Stillman bound of Equation~\ref{EMSbound} is $l= 80$, but
a careful analysis (see Example~3.6 of \cite{EMS}) shows that in this case 
taking $l = 4$ is sufficient. Then for example with ${\bf m}=(0,1,0,1)$ we have
$\phi({\bf m}) = (-2,2)$ so since $D = (3,-5)$, 
\[
S_{4\cdot \phi({\bf m}) + D} = S_{(-5,3)} = H^0(\cO_{{\mathcal H}_2}(-5,3)),
\]
and the dimension of this space is two. However, 
\[
(S/I_{\Sigma})_{{\bf 1}-(0,1,0,1)} =(S/I_{\Sigma})_{(1,0,1,0)}=0,
\]
since $x_1x_3  \in I_{\Sigma}$. A check shows that all 
terms in the summation vanish, save when 
\[
{\bf m} \in \{(1,1,0,1), (0,1,1,1), (1,1,1,1) \}
\]
For the first two values, $\phi({\bf m}) = (-1,2)$, and we compute
\[
S_{4\cdot \phi({\bf m}) + D} = S_{(-1,3)} = H^0(\cO_{{\mathcal H}_2}(-1,3)),
\]
which has dimension twelve. Since ${\bf 1}-{\bf m}$ is either
$(0,0,1,0)$ or $(1,0,0,0)$, for these two values of ${\bf m}$, 
\[
\dim_{\C}(S/I_{\Sigma})_{{\bf 1}-{\bf m}} =1
\]
Since $|{\bf m}|-d+n =1$, these two weights 
contribute $(-1) \cdot 2 \cdot 12 = -24$ to the
Euler characteristic. For the remaining weight ${\bf m}=(1,1,1,1)$,
the Stanley-Reisner ring is one dimensional in degree ${\bf 1}-{\bf m} = (0,0,0,0)$,
and $\phi(1,1,1,1)=(0,2)$ and 
\[
S_{4\cdot \phi({\bf m}) + D} = S_{(3,3)} = H^0(\cO_{{\mathcal H}_2}(3,3)),
\]
which has dimension $28$. Since $|{\bf m}|-d+n =2$ the contribution is
positive, thus
\[
\chi(\cO_{{\mathcal H}_2}(3D_3-5D_4)) = -24+28 = 4.
\]
\end{exm}

\noindent{\bf Problem} As noted in the introduction, this work
began as an attempt to find a toric proof of 
Hirzebruch-Riemann-Roch using Equation~\ref{MAIN}; 
it would be interesting to find such a proof. 
A proof of Equation~\ref{MAIN} also follows from results of 
Maclagan-Smith \cite{MacS}, I thank Greg Smith for noting this.
\vskip .04in
\noindent{\bf Acknowledgements} Computations were performed using 
{\tt Macaulay2} \cite{Macaulay2} by Grayson and Stillman, 
and {\tt NormalToricVarieties} \cite{NTV} by Greg Smith.
\bibliographystyle{amsalpha}

\end{document}